\theoremstyle{definition}
\newtheorem{definition}{Definition}
\theoremstyle{plain}
\newtheorem{theorem}{Theorem}
\newtheorem{lemma}{Lemma}
\newtheorem{probstat}{Problem}
\title{\LARGE \bf
A Separation Principle for Discrete-Time Fractional-Order \\ Dynamical Systems and its Implications to Closed-loop Neurotechnology}
\author{Sarthak Chatterjee$^{\dagger}$ \qquad Orlando Romero$^{\ddagger}$ \qquad S\'{e}rgio Pequito$^{\ddagger}$
\thanks{$^{\dagger}$Department of Electrical, Computer, and Systems Engineering, Rensselaer Polytechnic Institute, Troy, NY 12180, USA
        {\tt\small chatts3@rpi.edu}}%
\thanks{$^{\ddagger}$Department of Industrial and Systems Engineering, Rensselaer Polytechnic Institute, Troy, NY 12180, USA
        {\tt\small \{rodrio2,goncas\}@rpi.edu}}}%
\begin{document}

\maketitle
\thispagestyle{empty}
\pagestyle{empty}

\begin{abstract}

\mbox{Closed-loop} neurotechnology requires the capability to predict the state evolution and its regulation under (possibly) partial measurements. There is evidence that neurophysiological dynamics can be modeled by \mbox{fractional-order} dynamical systems. Therefore, we propose to establish a separation principle for \mbox{discrete-time} \mbox{fractional-order} dynamical systems, which are
inherently nonlinear and are able to capture spatiotemporal relations that exhibit \mbox{non-Markovian} properties. The separation principle states that the problems of controller and state estimator design can be done independently of each other while ensuring proper estimation and control in \mbox{closed-loop} setups. Lastly, we illustrate, as proof-of-concept, the application of the separation principle when designing controllers and estimators for these classes of systems in the context of neurophysiological data. In particular, we rely on real data to derive the models used to assess and regulate the evolution of \mbox{closed-loop} neurotechnologies based on electroencephalographic data.

\end{abstract}

\section{Introduction}

There is an increasing trend of looking into leveraging \emph{closed-loop} control and estimation strategies for the continued monitoring and interaction of subjects in the form of closed-loop neurotechnologies. Such technologies bring the promise of improving the quality-of-life of patients affected by neurological disorders such as epilepsy~\cite{rns}, Parkinson's disease~\cite{benabid2003deep}, Alzheimer's disease~\cite{nardone2015neurostimulation}, anxiety~\cite{sturm2007nucleus}, and depression~\cite{marangell2007neurostimulation}.  For neurophysiological signals, lingering interacting effects originating from long-term temporal dependence properties have illustrated the potential for clinical applications of \emph{fractional-order} based modeling, design, and analysis of such neurotechnologies~\cite{brodu2012exploring,ciuciu2012scale,zorick2013multifractal,zhang2015multifractal}.

Due to the highly dynamic nature of the neurophysiological processes, it is imperative that we consider \emph{feedback} mechanisms~\cite{andrzej}. A particularly successful closed-loop controller design strategy that has achieved remarkable success in several engineering applications is the strategy of \emph{model predictive control}~\cite{camacho2013model}.  Indeed, the main advent of \mbox{model-based} approaches is that we can understand how an external signal or stimulus would craft the dynamics of the process. In~\cite{romeroMPC}, the authors propose an electrical neurostimulation MPC-based strategy for the mitigation of epileptic seizures by modeling brain dynamics through \mbox{fractional-order} systems.


Recent work provides evidence that \mbox{fractional-order} dynamical systems (FODS) exhibit great success in accurately modeling dynamics which undergo nonexponential \mbox{power-law} decay, and have \mbox{long-term} memory or fractal properties~\cite{moon2008chaotic,lundstrom2008fractional,werner2010fractals,turcott1996fractal,thurner2003scaling,teich1997fractal}. Not only have FODS found applications in domains such as gas dynamics~\cite{chen2010anomalous}, viscoelasticity~\cite{jaishankar2012power}, chaotic systems~\cite{petravs2011fractional}, and biological swarming~\cite{west2014networks}, just to mention a few, but also in \mbox{cyber-physical} systems to model the interlaced evolution of the spatial and temporal components of complex networks~\cite{xuecps,xue2017reliable}. Some of these relationships have also been explored in the domain of neurophysiological signals such as electroencephalogram (EEG) and electrocardiogram (ECG)~\cite{magin2006fractional}.

The \emph{separation principle}, one of the cornerstones of modern feedback systems theory, states that the problems of optimal control and state estimation can be decoupled in certain specific instances~\cite{MitterStochControl}. These ideas were advanced early on in~\cite{JosephTouLinear,potter1964guidance} and~\cite{wonham1968separation} and are connected to the idea of \emph{certainty equivalence}~\cite{CertaintyEquiv} in stochastic control theory. Since then, the separation principle has been proposed in a wide variety of settings, including, but not limited to, stochastic control systems~\cite{Sep_Prin_CDC,Sep_Redux}, hybrid systems~\cite{bencze1995separation}, distributed control systems~\cite{rantzer2006separation}, quantum control~\cite{bouten2008separation}, linear systems with Markovian jumps~\cite{costa2003finite}, wireless fading channels subject to channel capacity constraints~\cite{charalambous2008control}, and \mbox{discrete-time} networked control systems with random packet drops~\cite{wu2011separation}.

However, the separation principle does not hold for nonlinear systems in general. Therefore, in this paper, we \textit{state and prove a separation principle result that stems from the problem of \mbox{closed-loop} \mbox{discrete-time} FODS and demonstrate the implications of our results in the context of \mbox{closed-loop} neurotechnology using \mbox{real-world} electroencephalographic data}. Specifically, we prove that if a \mbox{closed-loop} controller and an observer are designed for \mbox{discrete-time} FODS, then the aforementioned design can be carried out independently of each other. FODS are inherently nonlinear and they possess \mbox{long-term} memory in the sense that the evolution of a FODS aggregates the effects of \emph{all time} as the evolution of the system progresses. As a consequence, the innate \mbox{non-Markovian} nonlinearity of FODS does not immediately ensure the existence of a separation simple for the reasons mentioned above. Furthermore, FODS are finding increasing applications in the field of model predictive control (MPC), where the problems of estimator and controller design need the existence of a separation principle. Although separation principle results such as~\cite{echi_basdouri_benali_2019,naifar2018global,matignon1997observer} have been derived for FODS in continuous time, and, to the best of our knowledge, no such result has been previously proposed and analyzed for \mbox{discrete-time} FODS.

The remainder of the paper is organized as follows. Section~\ref{sec:fods} presents some essential theory regarding \mbox{discrete-time} FODS including the system model that we consider and the separation principle we propose to prove. Section~\ref{sec:proof} presents the proof of the separation principle for \mbox{discrete-time} FODS. Section~\ref{sec:examples} provides an illustrative example that shows how the separation principle can be used to sustain closed-loop feedback performance in the context of neurotechnology, and Section~\ref{sec:conclusion} concludes the paper.

\section{Problem Statement}\label{sec:fods}

We consider a deterministic linear \mbox{discrete-time} \mbox{fractional-order} dynamical system described as follows
\begin{align}\label{eq:frac_model}
    \Delta^\alpha x[k+1] &= Ax[k] + Bu[k] \nonumber \\
    y[k] &= Cx[k] \nonumber \\
    x[0] &= x_0,
\end{align}
where $x \in \mathbb{R}^n$ is the \emph{state} for time step $k \in \mathbb{N}$, $u \in \mathbb{R}^p$ is the \emph{input} and $y \in \mathbb{R}^n$ is the \emph{output}. $A \in \mathbb{R}^{n \times n}$ is the system matrix, $B \in \mathbb{R}^{n \times p}$ is the input matrix, and $C \in \mathbb{R}^{p \times n}$ is the sensor measurement matrix. Note that the system model is similar to a classic \mbox{discrete-time} linear \mbox{time-invariant} model but it is nonlinear due to the inclusion of the fractional derivative, whose expansion and discretization for the $i$-th state, $1 \leq i \leq n$, can be written as
\begin{equation}\label{eq:frac_deriv}
    \Delta ^{\alpha_i} x_i[k] = \sum_{j=0}^k \psi(\alpha_i,j) x_i[k-j],
\end{equation}
where $\alpha_i$ is the fractional order corresponding to state $i$ and
\begin{equation}
    \psi(\alpha_i,j) = \frac{\Gamma (j-\alpha_i)}{\Gamma (-\alpha_i) \Gamma (j+1)},
\end{equation}
with $\Gamma(\cdot)$ being the gamma function defined by $\Gamma (z) = \int_0^{\infty} s^{z-1} e^{-s} \: \mathrm{d}s$ for all complex numbers $z$ with $\Re (z) > 0$~\cite{DzielinskiFOS}.
\par Given the deterministic linear \mbox{discrete-time} \mbox{fractional-order} dynamical system~\eqref{eq:frac_model}, we have two main control objectives that need to be satisfied.
\begin{itemize}
    \item \textbf{Stabilizability:} In this problem, we deal with the issue of \emph{stabilization} of system~\eqref{eq:frac_model}. To this end, we consider the problem of designing a controller to stabilize the system~\eqref{eq:frac_model}.
The second control objective is concerned with designing an observer for~\eqref{eq:frac_model}.
    \item \textbf{Observer Design:} Assume that the states of~\eqref{eq:frac_model} are not known exactly. In this problem, we deal with the issue of designing an \emph{observer} for the system~\eqref{eq:frac_model}. The observer that we design should help us to estimate the states of the system given knowledge of the input $u \in \mathbb{R}^p$ and the output $y \in \mathbb{R}^n$.
\end{itemize}
With these two objectives in mind, we seek to prove the following result.
\begin{probstat}\label{prob:sep_prin}
Given the deterministic linear \mbox{discrete-time} \mbox{fractional-order} dynamical system~\eqref{eq:frac_model}, can the problems of stabilizability and observer design can be carried out independently of each other towards achieving closed-loop stabilizability with partial measurements?
\end{probstat}

\section{Separation Principle for \mbox{Fractional-Order} Systems}\label{sec:proof}

In this section, we will present the main result of our paper, i.e. the separation principle for \mbox{discrete-time} FODS. We first introduce the theory of state evolution in \mbox{discrete-time} FODS, presenting the relevant equations for the evolution of the dynamics of the system states in Lemma~\ref{lemma:state_prop}. We will then sequentially consider the problems of observer design (in Section~\ref{subsec:obsv}), which entails the construction of an observer for the dynamical system~\eqref{eq:frac_model}, followed by the problem of stabilizability (in Section~\ref{subsec:feedback}), which requires us to design a controller to stabilize the system~\eqref{eq:frac_model}. With the above ingredients, and some mathematical preliminaries, we present the statement and proof of the main result of our paper, the separation principle for \mbox{discrete-time} FODS in Section~\ref{subsec:sepa} (see Theorem~\ref{thm:sepprin}).

\par We begin by reviewing some essential theory for \mbox{fractional-order} systems, including \mbox{closed-form} expressions for the state dynamics. Using the expansion of the \mbox{fractional-order} derivative in~\eqref{eq:frac_deriv}, the evolution of the state vector can be written as follows

\begin{align}\label{eq:state_evol}
    x[k+1] &= Ax[k] - \sum_{j=1}^{k+1} D(\alpha,j) x[k+1-j] + Bu[k] \nonumber \\
    x[0] &= x_0,
\end{align}
where $D(\alpha,j) = \text{diag}(\psi(\alpha_1,j),\psi(\alpha_2,j),\ldots,\psi(\alpha_n,j))$. Alternatively,~\eqref{eq:state_evol} can be written as
\begin{align}\label{eq:state_evol_2}
    x[k+1] &= \sum_{j=0}^k A_j x[k-j] + Bu[k] \nonumber \\
    x[0] &= x_0,
\end{align}
where $A_0 = A - D(\alpha,1)$ and $A_j = -D(\alpha,j+1)$ for $j \geq 1$. Defining matrices $G_k$ as
\begin{equation}
    G_k =
    \begin{cases*}
      I_n & $k = 0$, \\
      \displaystyle \sum_{j=0}^{k-1} A_j G_{k-1-j} & $k \geq 1$,
    \end{cases*} 
\end{equation}
we can state the following result.
\begin{lemma}[\cite{guermah2008}]\label{lemma:state_prop}
The solution to the system described by~\eqref{eq:frac_model} is given by
\begin{equation}
    x[k] = G_k x[0] + \sum_{j=0}^{k-1} G_{k-1-j} Bu[j].
\end{equation}
\end{lemma}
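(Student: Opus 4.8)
The plan is to prove the closed-form expression by strong induction on $k$, using the one-step recursion~\eqref{eq:state_evol_2} as the engine. First I would dispatch the base case $k=0$: since $G_0 = I_n$ and the input sum is empty, the asserted formula collapses to $x[0] = x_0$, which is exactly the initial condition in~\eqref{eq:frac_model}. It is worth noting at the outset that, because~\eqref{eq:state_evol_2} couples $x[k+1]$ to \emph{all} earlier states $x[0],\ldots,x[k]$, genuine strong induction (not merely a one-step argument) is required; this is the only structural subtlety, and once it is in place the rest is finite-sum bookkeeping together with the defining recursion for $G_k$.

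For the inductive step, suppose the claimed identity holds for every index $0,1,\ldots,k$, and substitute it into the right-hand side of~\eqref{eq:state_evol_2}. Replacing each $x[k-j]$ by $G_{k-j}x[0] + \sum_{i=0}^{k-j-1} G_{k-j-1-i}Bu[i]$ yields
\[
x[k+1] = \Bigl(\sum_{j=0}^{k} A_j G_{k-j}\Bigr) x[0] \;+\; \sum_{j=0}^{k} A_j \sum_{i=0}^{k-j-1} G_{k-j-1-i} Bu[i] \;+\; Bu[k].
\]
The coefficient of $x[0]$ is $\sum_{j=0}^{k} A_j G_{k-j}$, which, after a trivial shift of the summation index, is precisely the recursive definition of $G_{k+1}$; hence the $x[0]$ contribution is already in the desired form $G_{k+1}x[0]$.

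The remaining task, which I expect to be the main bookkeeping obstacle, is to show that the input terms assemble into $\sum_{j=0}^{k} G_{k-j} Bu[j]$. I would interchange the order of summation in the double sum and collect, for each fixed $\ell \in \{0,\ldots,k-1\}$, the coefficient of $Bu[\ell]$: the contributing pairs are exactly those with $i=\ell$ and $0 \le j \le k-1-\ell$, giving coefficient $\sum_{j=0}^{k-1-\ell} A_j G_{k-1-\ell-j}$, which equals $G_{k-\ell}$ by the recursion defining $G$. The isolated term $Bu[k]$ supplies the $\ell=k$ case with coefficient $I_n = G_0 = G_{k-k}$, matching the pattern. Re-indexing $\ell$ as $j$ then gives $x[k+1] = G_{k+1}x[0] + \sum_{j=0}^{k} G_{k+1-1-j} Bu[j]$, closing the induction. (An alternative route is to pass to $z$-transforms / generating functions and identify $\sum_k G_k z^{-k}$ as the formal inverse of $I_n - \sum_{j\ge 0} A_j z^{-(j+1)}$, but the inductive argument above is self-contained and avoids convergence considerations.)
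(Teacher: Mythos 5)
Your proof is correct: the base case, the identification of $\sum_{j=0}^{k}A_jG_{k-j}$ with $G_{k+1}$, and the interchange of summation that collects the coefficient $\sum_{j=0}^{k-1-\ell}A_jG_{k-1-\ell-j}=G_{k-\ell}$ of each $Bu[\ell]$ are all sound, and you rightly flag that strong (not one-step) induction is needed because~\eqref{eq:state_evol_2} depends on the entire history. The paper itself gives no proof of this lemma --- it is quoted from the cited reference --- so there is nothing to compare against beyond noting that your strong induction on the recursion defining $G_k$ is the standard and essentially unique elementary argument for this closed-form solution.
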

Having obtained the \mbox{closed-form} expressions for the state vectors, we turn our attention to the problems of observer design and stabilizability.
\subsection{Observer Design}\label{subsec:obsv}

In this section, we will show that it is possible to obtain an unbiased estimator by considering an innovation term added to the dynamics of the state estimate, which can be described as follows. We consider the construction of a \mbox{Luenberger-like} observer~\cite{luenbergerbook}, whose state and output estimates are denoted by $\hat{x}[k] \in \mathbb{R}^n$ and $\hat{y}[k] \in \mathbb{R}^n$, respectively. This observer then takes the following form
\begin{align}\label{eq:obsv_dyn}
    \hat{x}[k+1] &= \sum_{j=0}^k A_j \hat{x}[k-j] + Bu[k] + L(y[k] - \hat{y}[k]), \nonumber \\
    \hat{y}[k] &= C \hat{x}[k],
\end{align}
where the matrix $L \in \mathbb{R}^{n \times n}$ is a weighting matrix that weights the difference between the outputs of the plant and the observer. Note that the observer consists of two parts, the first part being a copy of the plant's dynamics as applied to the observer, and an innovation term being a scaled version of the difference between the outputs of the plant and the observer.

\subsection{Stabilizability and Output Feedback}\label{subsec:feedback}

In this section, we consider the problem of stabilizing~\eqref{eq:state_evol_2} in a classical \mbox{state-feedback} control setting. Assume that the control input $u \in \mathbb{R}^p$ can be written a weighted linear combination of the states of the observer with memory, i.e.,
\begin{align}\label{eq:state_feedback}
u[k] &= F_0 \hat{x}[k] + F_1 \hat{x}[k-1] + \ldots + F_k \hat{x}[0] \nonumber \\
&= \sum_{j=0}^k F_j \hat{x}[k-j],
\end{align}
where $F_j \in \mathbb{R}^{p \times n}$ for $j=0,1,\ldots,k$. Substituting this into~\eqref{eq:obsv_dyn} and using the fact that $y[k] = Cx[k]$ and $\hat{y}[k] = C\hat{x}[k]$, we have
\begin{align}
    \hat{x}[k+1] &= \sum_{j=0}^k (A_j + BF_j) \hat{x}[k-j] + L(y[k] - \hat{y}[k]) \nonumber \\
    &= \sum_{j=0}^k (A_j + BF_j) \hat{x}[k-j] + L(Cx[k] - C\hat{x}[k]) \nonumber \\
    &= \sum_{j=0}^k (A_j + BF_j) \hat{x}[k-j] + LC e[k],
\end{align}
where $e[k] = x[k] - \hat{x}[k]$ is defined as the error between the states of the plant and the observer.
\par We now turn our attention towards the problem of output feedback. Going back to the dynamics of the plant and substituting~\eqref{eq:state_feedback} into~\eqref{eq:state_evol_2}, we have
\begin{align}\label{eq:plant_dyn}
    x[k+1] &= \sum_{j=0}^k A_j x[k-j] + Bu[k] \nonumber \\
    &= \sum_{j=0}^k A_j x[k-j] + \sum_{j=0}^k B F_j \hat{x}[k-j] \nonumber \\
    &= \sum_{j=0}^k A_j x[k-j] + \sum_{j=0}^k B F_j (x[k-j] - e[k-j]) \nonumber \\
    &= \sum_{j=0}^k (A_j + BF_j) x[k-j] - \sum_{j=0}^k BF_j e[k-j].
\end{align}
Next, we consider the dynamics of the error signal. Indeed, we have
\begin{align}\label{eq:error_dyn}
    e[k+1] &= x[k+1] - \hat{x}[k+1] \nonumber \\
    &= \sum_{j=0}^k (A_j + BF_j) x[k-j] - \sum_{j=0}^k BF_j e[k-j]  \nonumber \\ &- \left( \sum_{j=0}^k (A_j + BF_j) \hat{x}[k-j] + LC e[k] \right) \nonumber \\
    &= \sum_{j=0}^k (A_j + BF_j) e[k-j] - \sum_{j=0}^k BF_j e[k-j] \nonumber \\
    &- LC e[k] \nonumber \\
    &= \sum_{j=0}^k A_j e[k-j] - LC e[k].
\end{align}

\subsection{Separation Principle for \mbox{Discrete-Time} FODS}\label{subsec:sepa}

Having derived the expressions for the dynamics of the plant state and the error signal, we are now ready to state and prove the separation principle for \mbox{discrete-time} FODS. We first state some mathematical preliminaries that will aid our proof.
\begin{definition}\label{defn:hilbert}
A Hilbert space is a vector space $\mathscr{H}$ over $\mathbb{R}$ or $\mathbb{C}$ together with an inner product $\langle \cdot,\cdot \rangle$, such that relative to the metric $d(x,y) = \| x-y \|$ induced by the norm $\| \cdot \|^2 = \langle \cdot,\cdot \rangle$, $\mathscr{H}$ is a complete metric space.
\end{definition}
\begin{definition}\label{defn:seqsp}
The sequence space $\ell^2(\mathbb{N})$ denotes the Hilbert space of all \mbox{square-summable} sequences. Such sequences are represented by vectors with infinitely many elements $\mathcal{X} = \{ x[0],x[1],x[2],\ldots \}$. For $\mathcal{X}, \mathcal{Y} \in \ell^2(\mathbb{N})$, the space is equipped with the inner product
\[ \langle \mathcal{X},\mathcal{Y} \rangle = \sum_{k=0}^\infty x[k] y[k]^*, \]
where the $^*$ denotes the complex conjugate. In other words, a sequence $\mathcal{X} \in \ell^2(\mathbb{N})$ if $\| \mathcal{X} \|^2 = \langle \mathcal{X},\mathcal{X} \rangle = \sum_{k=0}^\infty | x[k] |^2 < \infty$.
\end{definition}
\begin{definition}\label{defn:backshift}
For a causal sequence $\mathcal{X}$, we define the \emph{backward shift operator} $\mathcal{S}: \ell^2(\mathbb{N}) \to \ell^2(\mathbb{N})$ by
\[ \mathcal{S} \mathcal{X} = \mathcal{S} \{ x[0],x[1],x[2],\ldots \} = \{ x[1],x[2],x[3],\ldots \}.\]
\end{definition}
\begin{definition}\label{defn:spec}
The \emph{spectrum} of a matrix $M$, denoted by $\mathsf{spec}(M)$ is the set of eigenvalues of the matrix $M$.
\end{definition}
Lastly, we present the main result of this paper.
\begin{theorem}[Separation Principle for \mbox{discrete-time} FODS]
\label{thm:sepprin}
Consider the \mbox{discrete-time} \mbox{fractional-order} dynamical system given in~\eqref{eq:frac_model}, and consider the problems of
\begin{enumerate}
    \item Designing an unbiased estimator (of the form~\eqref{eq:obsv_dyn}) for the system~\eqref{eq:frac_model} by following the procedure outlined in Section~\ref{subsec:obsv}, and,
    \item Designing a controller (of the form~\eqref{eq:state_feedback}) that stabilizes the system~\eqref{eq:frac_model} by following the procedure outlined in Section~\ref{subsec:feedback}.
\end{enumerate}
Then, given knowledge of the input $u \in \mathbb{R}^p$ and the output $y \in \mathbb{R}^n$, the above designs can be done independently of each other towards achieving closed-loop stabilizability with partial measurements.
\end{theorem}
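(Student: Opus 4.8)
The plan is to stack the plant state from~\eqref{eq:plant_dyn} and the estimation error from~\eqref{eq:error_dyn} into a single augmented trajectory and to show that the resulting \mbox{fractional-order} recursion is block upper-triangular, with the controller gains $\{F_j\}$ appearing only in one diagonal block and the observer gain $L$ only in the other. Separation then follows because block-triangularity is preserved by the convolution recursion that generates the fundamental matrices of Lemma~\ref{lemma:state_prop}, so that the stability of the augmented closed loop factors into two conditions that share no design variables.

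First I would set $z[k] = [\,x[k]^\top\ \ e[k]^\top\,]^\top \in \mathbb{R}^{2n}$ and combine~\eqref{eq:plant_dyn} with~\eqref{eq:error_dyn}, folding the $-LC\,e[k]$ term of~\eqref{eq:error_dyn} into the $j=0$ coefficient. This produces a homogeneous recursion of exactly the form of~\eqref{eq:state_evol_2} (with zero input), namely $z[k+1] = \sum_{j=0}^{k}\mathcal{A}_j\,z[k-j]$ with $z[0] = [\,x_0^\top\ \ (x_0-\hat{x}_0)^\top\,]^\top$, where
\[
\mathcal{A}_0 = \begin{bmatrix} A_0+BF_0 & -BF_0 \\ 0 & A_0 - LC \end{bmatrix}, \qquad
\mathcal{A}_j = \begin{bmatrix} A_j+BF_j & -BF_j \\ 0 & A_j \end{bmatrix} \ \ (j\ge 1).
\]
The decisive observation is that the $(2,1)$ block of every $\mathcal{A}_j$ is zero --- precisely because the error dynamics~\eqref{eq:error_dyn} does not depend on $x$ --- while the $(1,1)$ blocks are the ``controller-only'' coefficients $A_j+BF_j$ and the $(2,2)$ blocks are the ``observer-only'' coefficients $A_0-LC$ and $A_j$ ($j\ge1$).

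Next I would invoke Lemma~\ref{lemma:state_prop} for this augmented system, obtaining $z[k] = \mathcal{G}_k z[0]$ with $\mathcal{G}_0 = I_{2n}$ and $\mathcal{G}_k = \sum_{j=0}^{k-1}\mathcal{A}_j\,\mathcal{G}_{k-1-j}$, and prove by strong induction on $k$ that $\mathcal{G}_k$ is block upper-triangular with diagonal blocks $G_k^{F}$ and $G_k^{L}$, where $G_k^{F}$ is the fundamental matrix of the closed-loop plant $x[k+1]=\sum_{j}(A_j+BF_j)x[k-j]$ and $G_k^{L}$ that of the error system $e[k+1]=\sum_{j}A_j e[k-j]-LC\,e[k]$. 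The induction only uses that $2n\times 2n$ matrices with a fixed block partition and zero $(2,1)$ block form an algebra, and that the $(1,1)$ and $(2,2)$ blocks of a product are the products of the corresponding blocks, so that the $(1,1)$ block of $\mathcal{G}_k$ equals $\sum_{j=0}^{k-1}(A_j+BF_j)G_{k-1-j}^{F} = G_k^{F}$ and likewise the $(2,2)$ block equals $G_k^{L}$. Hence $e[k] = G_k^{L}(x_0-\hat{x}_0)$ is governed by the observer design alone, and the homogeneous part of $x[k] = G_k^{F}x_0 + (\mathcal{G}_k)_{12}(x_0-\hat{x}_0)$ by the controller design alone.

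Finally I would convert this into a stability statement. Passing to $\ell^2(\mathbb{N})$ (Definition~\ref{defn:seqsp}) and taking the $\mathcal{Z}$-transform via the backward shift operator of Definition~\ref{defn:backshift}, the augmented recursion becomes $(\lambda I_{2n} - \mathcal{A}(\lambda))Z(\lambda) = \lambda\, z[0]$ with symbol $\mathcal{A}(\lambda) = \sum_{j\ge 0}\mathcal{A}_j\lambda^{-j}$, which is block upper-triangular for every $\lambda$ in its region of convergence; therefore the characteristic function factors as
\[
\det\!\big(\lambda I_{2n} - \mathcal{A}(\lambda)\big) = \det\!\big(\lambda I_n - \mathcal{A}^{F}(\lambda)\big)\cdot\det\!\big(\lambda I_n - \mathcal{A}^{L}(\lambda)\big),
\]
so the unstable modes of the augmented closed loop are exactly the union of those of the controlled plant and those of the error dynamics (which, in the static-gain case $F_j=0$ for $j\ge1$, reduces to placing $\mathsf{spec}(A+BF_0)$ and $\mathsf{spec}(A-LC)$ inside the known \mbox{discrete-time} FODS stability region). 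Consequently, choosing $\{F_j\}$ to stabilize the first factor and $L$ to stabilize the second --- two problems with disjoint decision variables --- makes every entry of $\mathcal{G}_k$, including the cross term $(\mathcal{G}_k)_{12}$, vanish, which is the claimed closed-loop stabilizability under partial measurements. The step I expect to be the main obstacle is making ``stability'' precise for these infinite-memory, \mbox{non-Markovian} recursions: since the fundamental matrices decay only polynomially, one cannot naively deduce $(\mathcal{G}_k)_{12}\to 0$ from $G_k^{F}\to 0$ and $G_k^{L}\to 0$, so the decoupling is cleanest when carried out at the level of the operator symbol on $\ell^2(\mathbb{N})$, where block-triangularity delivers the factorization directly.
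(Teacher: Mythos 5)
Your proof is correct, and it rests on the same decisive observation as the paper's: once the plant state of~\eqref{eq:plant_dyn} and the estimation error of~\eqref{eq:error_dyn} are stacked, the augmented closed loop is block upper-triangular, with the $F_j$'s confined to the $(1,1)$ block and $L$ to the $(2,2)$ block. The packaging differs, though. The paper stacks the \emph{entire trajectories} into infinite column sequences $\mathscr{X},\mathscr{E}$ and writes the closed loop as a single block upper-triangular operator $\mathscr{J}$ built from lower block-Toeplitz pieces acting on $\ell^2(\mathbb{N})$, then asserts $\mathsf{spec}(\mathscr{J})=\mathsf{spec}(\mathscr{J}_1)\cup\mathsf{spec}(\mathscr{J}_3)$; you instead keep a $2n$-dimensional augmented state $z[k]$ with the memory kernel $\{\mathcal{A}_j\}$, push the block-triangularity through the convolution recursion of Lemma~\ref{lemma:state_prop} by induction on the fundamental matrices $\mathcal{G}_k$, and then factor the characteristic function of the symbol $\mathcal{A}(\lambda)$. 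These are two representations of the same convolution operator (your $\mathcal{A}(\lambda)$ is the symbol of the paper's $\mathscr{J}$), so the separation conclusion is the same; what your route buys is (i) an explicit formula $e[k]=G_k^L(x_0-\hat{x}_0)$ and $x[k]=G_k^F x_0+(\mathcal{G}_k)_{12}(x_0-\hat{x}_0)$ exhibiting the decoupling at the level of trajectories, and (ii) an honest acknowledgment of the step the paper elides, namely that for these polynomially decaying, infinite-memory kernels one must argue separately that the cross term $(\mathcal{G}_k)_{12}$ decays --- the spectrum-union claim for an infinite-dimensional block-triangular operator gives containment easily but decay of the off-diagonal block is not automatic from decay of the two diagonal blocks. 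One further point in your favor: your coefficient matrices correctly place $-LC$ only in the $j=0$ term of the error kernel ($\mathcal{A}_j$ has $(2,2)$ block $A_j$ for $j\geq 1$), which matches~\eqref{eq:error_dyn}, whereas the paper's $\mathscr{J}_3$ writes $A_j-LC$ in every sub-diagonal block; that appears to be a typo in the paper and does not affect its conclusion (since $\mathscr{J}_3$ still involves only $L$ and the $A_j$'s), but your version is the one consistent with the derived error dynamics.
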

\begin{proof}
With respect to our problem, we define the infinite column sequences
\begin{equation}
    \mathscr{X} = \begin{bmatrix} x[0] \\ x[1] \\ x[2] \\ \vdots \end{bmatrix}, \: \mathscr{E} = \begin{bmatrix} e[0] \\ e[1] \\ e[2] \\ \vdots \end{bmatrix}.
\end{equation}
Using $\mathscr{X}$ and $\mathscr{E}$, we can now compactly write equations~\eqref{eq:plant_dyn} and \eqref{eq:error_dyn} as follows
\begin{equation}\label{eq:compact}
    \begin{bmatrix} \mathcal{S}\mathscr{X} \\ \mathcal{S}\mathscr{E} \end{bmatrix} = \underbrace{\begin{bmatrix} \mathscr{J}_1 & \mathscr{J}_2 \\ \mathbf{0} & \mathscr{J}_3 \end{bmatrix}}_{\mathscr{J}} \begin{bmatrix} \mathscr{X} \\ \mathscr{E} \end{bmatrix},
\end{equation}
where $\mathcal{S}$ is the backward shift operator and the matrices $\mathscr{J}_i$ ($i=1,2,3$) are Toeplitz with the following structures
\begin{subequations}
\begin{equation}
    \mathscr{J}_1 = \begin{bmatrix} A_0 + BF_0 & 0 & 0 & \ldots & 0 \\
    A_1 + BF_1 & A_0+BF_0 & 0 & \ldots & 0 \\
    A_2 + BF_2 & A_1+BF_1 & A_0+BF_0 & \ldots & 0 \\
    \vdots & \vdots & \vdots & \ddots & \vdots
    \end{bmatrix},
\end{equation}
\begin{equation}
    \mathscr{J}_2 = \begin{bmatrix} -BF_0 & 0 & 0 & \ldots & 0 \\
    -BF_1 & -BF_0 & 0 & \ldots & 0 \\
    -BF_2 & -BF_1 & -BF_0 & \ldots & 0 \\
    \vdots & \vdots & \vdots & \ddots & \vdots
    \end{bmatrix},
\end{equation}
\begin{equation}
    \mathscr{J}_3 = \begin{bmatrix} A_0 - LC & 0 & 0 & \ldots & 0 \\
    A_1 - LC & A_0 - LC & 0 & \ldots & 0 \\
    A_2 - LC & A_1 - LC & A_0 - LC & \ldots & 0 \\
    \vdots & \vdots & \vdots & \ddots & \vdots
    \end{bmatrix}.
\end{equation}
\end{subequations}
Note that $\mathscr{J}_1$ only contains terms that pertain to the stabilizability, and $\mathscr{J}_3$ only contains terms that pertain to the observer design. From the block structure of~\eqref{eq:compact}, it can be seen that
\[ \mathsf{spec}(\mathscr{J}) = \mathsf{spec}(\mathscr{J}_1) \cup \mathsf{spec}(\mathscr{J}_3), \]
and the design of $\mathscr{J}_1$ and $\mathscr{J}_3$ can be carried out independently of each other.
\end{proof}

Although the design of the Luenberger-like observer in Section~\ref{subsec:obsv} starts with the design of a single weighting matrix $L$ that weights the outputs of the plant and the observer without memory, it is instructive to note that the separation principle for \mbox{discrete-time} FODS that we proved in Section~\ref{subsec:sepa} also holds for an observer of the following form
\begin{align}\label{eq:new_obsv_dyn}
    \hat{x}[k+1] &= \sum_{j=0}^k A_j \hat{x}[k-j] + Bu[k] \nonumber \\ &+ \sum_{j=0}^k L_j(y[k-j] - \hat{y}[k-j]), \nonumber \\
    \hat{y}[k] &= C \hat{x}[k].
\end{align}
The key difference between the observers in equations~\eqref{eq:obsv_dyn} and~\eqref{eq:new_obsv_dyn} are that in the former we have a single weighting matrix that weights the difference of the outputs of the plant and the observer, and in the latter, we use multiple weighting matrices to weight the differences of the outputs of the plant and the observer with memory. We then have the following theorem.
\begin{theorem}
\label{thm:new_sepprin}
Consider the \mbox{discrete-time} \mbox{fractional-order} dynamical system given in~\eqref{eq:frac_model}, and consider the problems of
\begin{enumerate}
    \item Designing an unbiased estimator (of the form~\eqref{eq:new_obsv_dyn}) for the system~\eqref{eq:frac_model} by following the procedure outlined above, and,
    \item Designing a controller (of the form~\eqref{eq:state_feedback}) that stabilizes the system~\eqref{eq:frac_model} by following the procedure outlined in Section~\ref{subsec:feedback}.
\end{enumerate}
Then, given knowledge of the input $u \in \mathbb{R}^p$ and the output $y \in \mathbb{R}^n$, the above designs can be done independently of each other towards achieving closed-loop stabilizability with partial measurements.
\end{theorem}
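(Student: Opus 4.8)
The plan is to follow the proof of Theorem~\ref{thm:sepprin} essentially verbatim, the only genuine work being the recomputation of the error dynamics that the memory-augmented innovation term $\sum_{j=0}^k L_j(y[k-j]-\hat{y}[k-j])$ induces. First I would substitute $y[k-j]-\hat{y}[k-j] = C(x[k-j]-\hat{x}[k-j]) = Ce[k-j]$ into~\eqref{eq:new_obsv_dyn} and insert the output-feedback law~\eqref{eq:state_feedback}, obtaining
\begin{equation*}
\hat{x}[k+1] = \sum_{j=0}^k (A_j + BF_j)\hat{x}[k-j] + \sum_{j=0}^k L_j C e[k-j].
\end{equation*}
The plant recursion under~\eqref{eq:state_feedback} is unchanged from~\eqref{eq:plant_dyn}, namely $x[k+1] = \sum_{j=0}^k (A_j+BF_j)x[k-j] - \sum_{j=0}^k BF_j e[k-j]$. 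Subtracting, the $(A_j+BF_j)$ contributions collapse onto $e[k-j]=x[k-j]-\hat{x}[k-j]$ while the two $BF_j e[k-j]$ terms cancel, leaving the autonomous error recursion
\begin{equation*}
e[k+1] = \sum_{j=0}^k A_j e[k-j] - \sum_{j=0}^k L_j C e[k-j] = \sum_{j=0}^k (A_j - L_j C) e[k-j],
\end{equation*}
which involves only the observer gains $\{L_j\}$ and is decoupled from the controller gains $\{F_j\}$.

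Next I would repeat the $\ell^2(\mathbb{N})$ reformulation of Section~\ref{subsec:sepa}: stacking the infinite column sequences $\mathscr{X}$ and $\mathscr{E}$, the joint dynamics again take the block form~\eqref{eq:compact}, where $\mathscr{J}_1$ and $\mathscr{J}_2$ are exactly the block-Toeplitz operators appearing in the proof of Theorem~\ref{thm:sepprin} (with generating blocks $A_j+BF_j$ and $-BF_j$, respectively), while $\mathscr{J}_3$ is now the block-lower-triangular Toeplitz operator whose $j$-th subdiagonal block is $A_j - L_j C$ (its principal diagonal block being $A_0 - L_0 C$). The zero block in the lower-left corner of $\mathscr{J}$ survives precisely because the error recursion above has no dependence on $\mathscr{X}$, so $\mathscr{J}$ remains block upper-triangular and $\mathsf{spec}(\mathscr{J}) = \mathsf{spec}(\mathscr{J}_1) \cup \mathsf{spec}(\mathscr{J}_3)$. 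Since $\mathscr{J}_1$ is a function of $\{F_j\}$ alone and $\mathscr{J}_3$ of $\{L_j\}$ alone, placing the closed-loop spectrum (stabilization) and shaping the error spectrum (estimation) become independent design tasks, which is the claim of Theorem~\ref{thm:new_sepprin}; unbiasedness of the estimator follows as in Section~\ref{subsec:obsv}, since $\hat{x}[0]=x_0$ gives $e[0]=0$ and the autonomous error recursion then forces $e[k]\equiv 0$.

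The main obstacle I anticipate is functional-analytic bookkeeping rather than new algebra: one must verify that $\mathscr{J}$, $\mathscr{J}_1$ and $\mathscr{J}_3$ are genuine bounded operators on $\ell^2(\mathbb{N})$ (or on a suitably weighted sequence space), so that the block-triangular spectral identity is legitimate, and that the memory gains $\{L_j\}$ do not destroy this. In the single-gain setting of Theorem~\ref{thm:sepprin}, boundedness rests on the decay $\psi(\alpha_i,j) = O(j^{-1-\alpha_i})$ of the coefficients in~\eqref{eq:frac_deriv}, hence on the summability of the blocks $A_j = -D(\alpha,j+1)$; here one additionally needs the sequences $\{L_j\}$ (and $\{F_j\}$) to be, say, absolutely summable, so this should be recorded as a standing hypothesis on the admissible memory-observer designs. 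With that caveat in place, the remainder is a routine transcription of Section~\ref{subsec:sepa}.
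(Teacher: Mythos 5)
Your proposal is correct and, in substance, takes the same route the paper intends: the paper's own proof of Theorem~\ref{thm:new_sepprin} is a single sentence that sets $L_0=L$ and $L_1=L_2=\cdots=0$ and then appeals to ``a similar line of reasoning'' as in Theorem~\ref{thm:sepprin}, and what you have written out --- the error recursion $e[k+1]=\sum_{j=0}^{k}(A_j-L_jC)\,e[k-j]$ and the block-Toeplitz operator $\mathscr{J}_3$ with generating blocks $A_j-L_jC$ --- is precisely the content of that unstated reasoning, so your version is strictly more complete. It is worth noting that the paper's literal reduction runs in the wrong logical direction: specializing $L_j=0$ for $j\geq 1$ shows that the single-gain observer~\eqref{eq:obsv_dyn} is a particular case of the memory observer~\eqref{eq:new_obsv_dyn}, which would derive Theorem~\ref{thm:sepprin} from Theorem~\ref{thm:new_sepprin} rather than the converse; the explicit recomputation you carry out is the correct way to establish the general claim. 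Your final caveat about the boundedness of $\mathscr{J}$, $\mathscr{J}_1$, and $\mathscr{J}_3$ as operators on $\ell^2(\mathbb{N})$ (and the attendant summability hypotheses on $\{L_j\}$ and $\{F_j\}$) identifies a gap that the paper never addresses, even in the proof of Theorem~\ref{thm:sepprin}; it is a fair observation about the framework rather than a defect of your argument.
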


\begin{proof}
By setting $L_0=L$, and $L_1 = L_2 = \ldots = L_k = 0$ for $k=1,2,\ldots$, the problem reduces to the statement of Theorem~\ref{thm:sepprin}, and the proof follows by a similar line of reasoning.
\end{proof}

\section{Closed-Loop Neurotechnology}\label{sec:examples}
In this section, we illustrate our results by designing a \emph{model predictive controller} (MPC) that simulates a simple implantable closed-loop electrical neurostimulator. The controller is implemented on a \mbox{discrete-time} fractional-order plant, representing normal brain activity, whereas the predictive model will be based on an autoregressive finite-history approximation. Naturally, the controller will be designed as if it had access to the actual state of the system, and similarly the state observer (whose estimates are fed into the designed controller) is designed without consideration of the control strategy adopted.

We start by identifying the spatial and temporal parameters $A$ and $\alpha$ in~\eqref{eq:frac_model}, from a 4-channel sample of length 1 second of normalized EEG recordings. We model the $n=4$ components of the state vector as denoting the different recorded channels (\emph{i.e.} readings obtained from microelectrodes). The data used for these experiments are from subject 11 from the CHB-MIT Scalp EEG database~\cite{PhysioNet}. To achieve this identification, we leveraged the tools developed in~\cite{gaurav2017acc}, which led us to
\begin{equation}
    A = \left[\begin{array}{cccc} 0.0350 & 0.0526 & -0.0034 & -0.0391\\ 0.0296 & -0.0496 & 0.0646 & 0.0610\\ -0.0103 & -0.0028 & -0.0091 & 0.0068\\ -0.0291 & 0.0143 & -0.0008 & 0.0394 \end{array}\right]
\end{equation}
and
\begin{equation}
    \alpha = \left[\begin{array}{cccc} 0.5945 & 0.7176 & 0.9603 & 0.6279 \end{array}\right]^\mathsf{T},
\end{equation}
as the main parameters in the system. We are interested in modeling the impact of an electrical stimulation signal $u[k]$ originating from an integrated arbitrary voltage generator circuit. We start by considering the scenario \mbox{$B = \begin{bmatrix}1& 1& 1 & 1\end{bmatrix}^\mathsf{T}$} corresponding to a stimulus that perturbs all channels uniformly (\emph{e.g.}, if the four electrodes are placed considerably near each other). The measurements $y[k]$ used to estimate the state (through a simple \mbox{Kalman-like} filter) will be assumed simply as those given directly by the first channel, \emph{i.e.}, $C = \begin{bmatrix}1 & 0 & 0 & 0\end{bmatrix}$.

At each step $k$, the MPC controller will minimize a quadratic cost function
\begin{equation}
    J(u[k],\ldots,u[k+P-1]) = \sum_{j=1}^P\|x[k] - x_\mathrm{ref}[k+j]\|^2,
\end{equation}
with the predicted evolution $x[k]$ evolving not by the original system~\eqref{eq:frac_model}, by instead by a multivariate autoregressive (MVAR) approximation
\begin{equation}
    x[k+1] = \sum_{j=0}^{p-1}A_j x[k-j] + Bu[k],
\end{equation}
based on~\eqref{eq:state_evol_2}, by clipping off the infinite-horizon memory dependence by instead only a $p$-horizon one. The \emph{prediction horizon} $P$ was set to $P=8$ (50 milliseconds), whereas the \emph{control horizon} $M$ upon which the solution is implemented was set to $M=4$ (25 milliseconds). The reference signal~$x_\mathrm{ref}[k]$ denotes a simple rectangular pulse of frequency \mbox{8 Hz}, within the usual range of \emph{alpha rhythms} that characterize relaxed, but conscious brain activity~\cite{alpha}.

\begin{figure}[!ht]
    \centering
    \includegraphics[scale = 0.55]{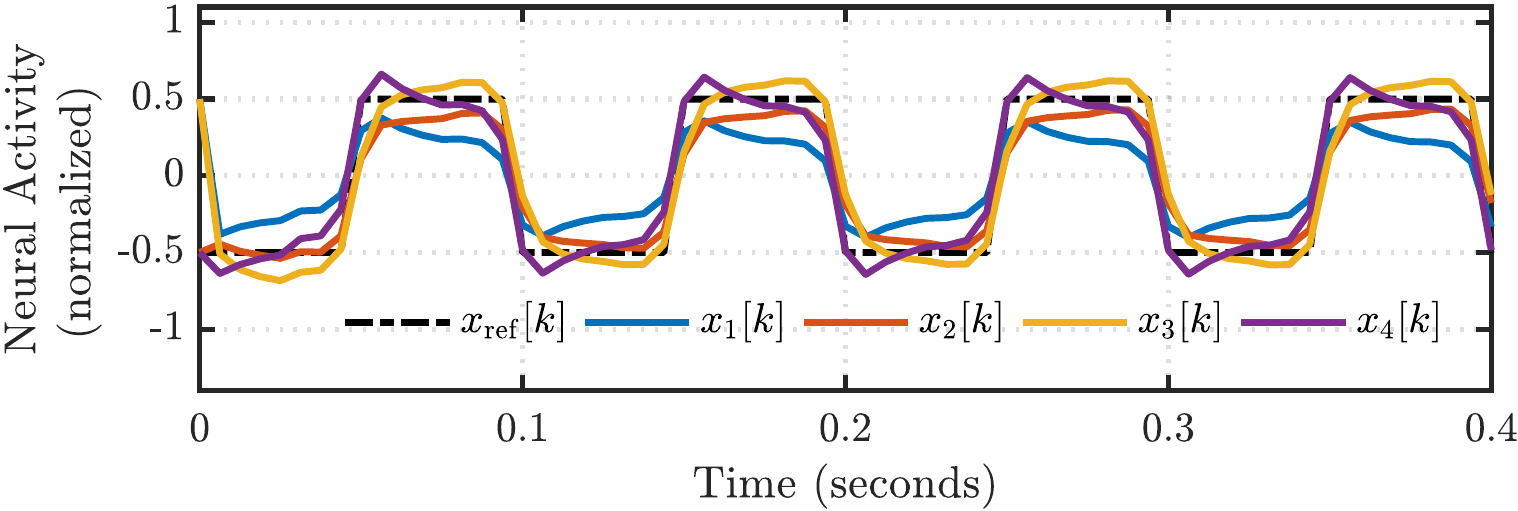}
    \caption{MPC-based neuromodulation of a \mbox{discrete-time} FODS representing normal brain activity by state tracking of a rectangular pulse of frequency 8 Hz, roughly in the range of alpha rhythms.}
    \label{fig:MPC}
\end{figure}

The results can be seen in Fig.~\ref{fig:MPC}, and as we can see, the controller is largely successful despite never having direct access to the state of the system. In other words, efficient design of a closed-loop controller and observer can be carried out separately for discrete-time fractional-order systems, as formally established in this paper.


\section{Conclusions and Future Work}\label{sec:conclusion}

In this paper, we proposed and proved a separation principle result for \mbox{discrete-time} FODS. As a consequence, we can decouple the problems of designing, respectively, a controller for the stabilization of the system states, and an observer for the estimation of the system states. The ability of \mbox{discrete-time} FODS to model complex spatiotemporal relationships in neurophysiological signals have led to the use of these models in closed-loop neurotechnologies.



Very rarely in practical settings, however, do we have deterministic \mbox{fractional-order} models. EEG signals, for instance, are particularly prone to disturbances arising from outside the brain, which are referred to as \emph{artifacts} in the neuroscience literature~\cite{britton2016electroencephalography}. Furthermore, stabilizing these models in the presence of disturbances becomes relevant in the treatment of disorders like epilepsy, Parkinson's disease, or Alzheimer's disease, since, in recent years, there have been increasing research efforts into finding possible palliative therapies for the aforementioned using neurofeedback~\cite{marzbani2016neurofeedback}. Future work, therefore, will focus on developing controllers and observers for FODS with associated process and measurement noise, and investigating the possible existence of separation \mbox{principle-like} results akin to those already existing in the field of linear stochastic control theory.






\bibliographystyle{IEEEtran}
\bibliography{IEEEabrv,mybibfile}

\end{document}